\renewenvironment{proof}{\noindent {\bfseries Proof.}}{\hfill $\Box$}
\def\@biblabel#1{#1.}
\newtheorem{theorem}{Theorem}[section]
\newtheorem{lemma}{Lemma}[section]
\newtheorem{definition}{Definition}[section]
\newtheorem{remark}{Remark}[section]
\begin{document}

\title{The DuBois--Reymond Fundamental Lemma of the Fractional Calculus of Variations
and an Euler--Lagrange Equation Involving only Derivatives of Caputo}

\author{Matheus J. Lazo$^{1}$
\and
Delfim F. M. Torres$^{2}$}

\date{}

% --------------------------------------------------

{\renewcommand{\thefootnote}{}

\footnotetext{{\bf This is a preprint of a paper whose final and definite
form will appear in \emph{Journal of Optimization Theory and Applications}
(JOTA). Paper submitted 26-July-2012; revised 12-Sept-2012 and 22-Sept-2012;
accepted for publication 02-Oct-2012.}}}

% --------------------------------------------------

\footnotetext[1]{Instituto de Matem\'{a}tica, Estat\'{\i}stica e F\'{\i}sica
--- FURG, Rio Grande, RS, Brazil.
E-mail: matheuslazo@furg.br.}

\footnotetext[2]{Corresponding author.
Center for Research and Development in Mathematics and Applications,
Department of Mathematics, University of Aveiro, 3810-193 Aveiro, Portugal.
E-mail: delfim@ua.pt.}

% ---------------------------------------------------

\maketitle

% ---------------------------------------------------

\begin{abstract}
Derivatives and integrals of non-integer order
were introduced more than three centuries ago,
but only recently gained more attention due to their
application on nonlocal phenomena. In this context,
the Caputo derivatives are the most popular approach
to fractional calculus among physicists, since differential
equations involving Caputo derivatives require
regular boundary conditions. Motivated by several applications
in physics and other sciences, the fractional calculus
of variations is currently in fast development. However,
all current formulations for the fractional variational calculus fail
to give an Euler--Lagrange equation with only Caputo derivatives.
In this work, we propose a new approach to the fractional calculus
of variations by generalizing the DuBois--Reymond lemma
and showing how Euler--Lagrange equations
involving only Caputo derivatives can be obtained.

\bigskip

\noindent \textbf{Keywords:} fractional calculus, fractional calculus of variations,
DuBois--Reymond lemma, Euler--Lagrange equations in integral and differential forms.

\bigskip

\noindent \textbf{Mathematics Subject Classification 2010:} 49K05, 26A33, 34A08.

\end{abstract}

% ------------------------------------

\section{Introduction}

The fractional calculus with derivatives and integrals of non-integer order
started more than three centuries ago, with l'H\^opital and Leibniz,
when the derivative of order $1/2$ was suggested \cite{OldhamSpanier}.
This subject was then considered by several mathematicians like Euler, Fourier,
Liouville, Grunwald, Letnikov, Riemann, and many others up to nowadays.
Although the fractional calculus is almost as old as the usual integer order calculus,
only in the last three decades it has gained more attention due to its many applications
in various fields of science, engineering, economics, biomechanics, etc.
(see \cite{SATM,Kilbas,Hilfer,Magin,HerrmannBook} for a review).

Fractional derivatives are nonlocal operators and are historically applied
in the study of nonlocal or time dependent processes. The first and well
established application of fractional calculus in physics was in the framework
of anomalous diffusion, which is related to features observed in many physical systems,
e.g., in dispersive transport in amorphous semiconductor, liquid crystals,
polymers, proteins, etc. \cite{Metzler2,Klages}. Recently,
the study of nonlocal quantum phenomena through fractional calculus began a fast development,
where the nonlocal effects are due to either long-range interactions
or time-dependent processes with many scales
\cite{Hilfer,Laskin2,Naber,Iomin,Tarasov1}.
Relativistic quantum mechanics \cite{KP,Zavada,MAB},
field theories \cite{Tarasov2,Herrmann,Lazo},
and gravitation \cite{Munkhammar}, have been considered
in the context of fractional calculus. The physical and geometrical meaning
of the fractional derivatives has been investigated by several authors \cite{NM,Podlubny}.
It has been shown that the Stieltjes integral
can be interpreted as the real distance passed by a moving object \cite{NM,Podlubny}.
The most popular operators of fractional calculus,
namely the Riemann--Liouville \cite{OldhamSpanier} and Caputo \cite{caputo,caputo2}
fractional operators, are then given similar physical interpretations \cite{NM,Podlubny}.

One of the most remarkable applications of fractional calculus
in physics is in the context of classical mechanics. Riewe
showed that a Lagrangian involving fractional time derivatives
leads to an equation of motion with nonconservative forces, such
as friction \cite{Riewe,Riewe2}. It is a striking result
since frictional and nonconservative forces
are beyond the usual macroscopic variational treatment and, consequently,
beyond the most advanced methods of classical mechanics \cite{Bauer}.
Riewe generalized the usual variational calculus to Lagrangians
depending on fractional derivatives, in order to deal
with nonconservative forces \cite{Riewe,Riewe2}. Recently, several approaches
have been developed in order to generalize the least action principle
and the Euler--Lagrange equations to include fractional derivatives
\cite{Riewe,Riewe2,Agrawal,BA,Cresson,APT,AT,OMT,MyID:226,MyID:227}.
In this new formalism, because of the nonlocal properties of fractional time derivatives,
the Euler--Lagrange equations appear to not respect
the causality principle. This difficulty is under investigation
by several authors \cite{CI,DY}. On the other hand,
in all the above  mentioned references,
the Euler--Lagrange equations always depend on the Riemann--Liouville
or mixed Caputo and Riemann--Liouville derivatives.
For the state of the art of the fractional calculus of variations
and respective fractional Euler--Lagrange equations, we refer the reader
to the recent book \cite{MyID:208}.

It is important to remark that while the Riemann--Liouville fractional derivatives \cite{OldhamSpanier}
are histo\-ri\-cally the most studied approach to fractional calculus, the Caputo \cite{caputo,caputo2} approach
to fractional derivatives is the most popular among physicists and scientists,
because the differential equations defined in terms of Caputo derivatives require regular
initial and boundary conditions. Furthermore, differential equations with Riemann--Liouville
derivatives require non-standard fractional initial and boundary conditions that lead,
in general, to singular solutions, thus limiting their application in physics and science \cite{HerrmannBook}.
On the other hand, within current formulations of the fractional calculus of variations,
even Lagrangians depending only on Caputo derivatives lead to Euler--Lagrange equations
with Riemann--Liouville derivatives (see \cite{AMT} and references therein).
This is a consequence of the Lagrange method to extremize functionals:
application of integration by parts for Caputo derivatives
in the Gateaux derivative of the functional, relates Caputo
to Riemann--Liouville derivatives. In the present work, we propose a different
approach, by generali\-zing the DuBois--Reymond fundamental lemma
to variational functionals with Caputo derivatives.
This new result enable us to obtain an Euler--Lagrange equation in integral form,
containing only Caputo derivatives. Moreover,
when the Lagrangian is $C^2$ in its domain,
we obtain an Euler--Lagrange fractional differential
equation depending only on Caputo derivatives
by taking the Caputo derivative on both sides
of the Euler--Lagrange equation in integral form.

The article is organized as follows. In Section~\ref{sec:2}, we review the basic notions
of Riemann--Liouville and Caputo fractional calculus, that are needed to formulate
the fractional problem of the calculus of variations. The fractional DuBois--Reymond
lemma is proved in Section~\ref{sec:3}, while in Section~\ref{sec:4} we obtain
the Euler--Lagrange equations in integral and differential forms. Finally, Section~\ref{sec:5} presents
the main conclusions of our work. Although we adopt here the Caputo fractional calculus,
the results can be straightforward rewritten within other approaches,
like the Riemann--Liouville one.

% ------------------------------------

\section{The Fractional Calculus}
\label{sec:2}

There are several definitions of fractional order derivatives.
These definitions include the Riemann--Liouville, Caputo, Riesz,
Weyl, and Grunwald--Letnikov operators
\cite{OldhamSpanier,SATM,Kilbas,Hilfer,Magin,SKM,Diethelm}.
In this Section, we review some definitions and properties
of the Caputo and Riemann--Liouville fractional calculus.
Although our first main objective is to obtain a DuBois--Reymond
lemma for functionals with Caputo fractional derivatives,
the Riemann--Liouville derivatives are also defined here
because they naturally arise in the formulation.

Despite many different approaches to fractional calculus,
several known formulations are connected with the analytical
continuation of the Cauchy formula for $n$-fold integration.

\begin{theorem}[Cauchy formula]
Let $f:[a,b]\rightarrow \mathbb{R}$ be Riemann integrable in $[a,b]$.
The $n$-fold integration of $f$, $n\in \mathbb{N}$, is given by
\begin{equation}
\label{a2}
\begin{split}
\int_{a}^x f(\tilde{x})(d\tilde{x})^{n}
&= \int_{a}^x\int_{a}^{x_{n}}\int_{a}^{x_{n-1}}
\cdots \int_{a}^{x_3}\int_{a}^{x_2} f(x_1)dx_1dx_2\cdots dx_{n-1}dx_{n}\\
&= \frac{1}{\Gamma(n)}\int_{a}^x \frac{f(u)}{(x-u)^{1-n}}du ,
\end{split}
\end{equation}
where $\Gamma$ is the Euler gamma function.
\end{theorem}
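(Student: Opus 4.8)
The plan is to proceed by induction on $n$. For the base case $n=1$, the asserted identity reduces to $\int_{a}^{x} f(x_1)\,dx_1 = \frac{1}{\Gamma(1)}\int_{a}^{x}(x-u)^{0}f(u)\,du$, which holds trivially because $\Gamma(1)=1$ and $(x-u)^{0}=1$.

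For the inductive step, suppose that \eqref{a2} holds for some $n\in\mathbb{N}$, so that the $n$-fold iterated integral of $f$ equals $\frac{1}{\Gamma(n)}\int_{a}^{x}(x-u)^{n-1}f(u)\,du$. I would then write the $(n+1)$-fold integral as one outer integration applied to the $n$-fold integral, $\int_{a}^{x} f(\tilde{x})(d\tilde{x})^{n+1} = \int_{a}^{x}\left[\int_{a}^{t} f(\tilde{x})(d\tilde{x})^{n}\right]dt$, and substitute the induction hypothesis to obtain $\frac{1}{\Gamma(n)}\int_{a}^{x}\int_{a}^{t}(t-u)^{n-1}f(u)\,du\,dt$.

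The key step is to interchange the order of the two integrations over the triangle $\{(u,t): a\le u\le t\le x\}$. Since $(t-u)^{n-1}$ is continuous --- hence bounded --- on this region and $f$ is Riemann integrable, Fubini's theorem applies, and the double integral becomes $\frac{1}{\Gamma(n)}\int_{a}^{x} f(u)\left[\int_{u}^{x}(t-u)^{n-1}\,dt\right]du = \frac{1}{\Gamma(n)}\int_{a}^{x} f(u)\,\frac{(x-u)^{n}}{n}\,du$. Invoking the functional equation $n\,\Gamma(n)=\Gamma(n+1)$ yields precisely \eqref{a2} with $n$ replaced by $n+1$, which closes the induction.

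The only delicate point is the justification of the interchange of integrals over the simplex; the remaining ingredients --- the evaluation of the inner integral $\int_{u}^{x}(t-u)^{n-1}\,dt = (x-u)^{n}/n$ and the recursion for the gamma function --- are entirely routine.
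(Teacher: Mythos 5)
Your induction argument is correct and complete: the base case is trivial, and the inductive step (peel off the outer integration, substitute the hypothesis, interchange the order of integration over the simplex, evaluate $\int_u^x(t-u)^{n-1}\,dt=(x-u)^n/n$, and use $n\,\Gamma(n)=\Gamma(n+1)$) is exactly the standard textbook derivation of the Cauchy formula. Note that the paper itself does not prove this theorem at all --- it simply cites Oldham and Spanier --- so there is no in-paper argument to compare against; yours is the classical one. The one point you flag as delicate, the interchange of integrals, is indeed legitimate: a Riemann integrable $f$ is bounded and Lebesgue measurable, so $(t,u)\mapsto (t-u)^{n-1}f(u)$ is bounded and measurable on the triangle $\{a\le u\le t\le x\}$, Fubini--Tonelli applies, and the resulting Lebesgue integrals agree with the Riemann ones; alternatively, for this particular kernel the interchange can be verified by an elementary approximation argument without invoking measure theory.
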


The proof of Cauchy's formula \eqref{a2} can be found in several textbooks,
for example, it can be found in \cite{OldhamSpanier}.
The analytical continuation of \eqref{a2} gives a definition
for integration of non integer (or fractional) order.
This fractional order integration is the building block
of the Riemann--Liouville and Caputo calculus, the two most popular
formulations of fractional calculus, as well as several other approaches
\cite{OldhamSpanier,SATM,Kilbas,Hilfer,Magin,SKM,Diethelm}.
The fractional integrations obtained from \eqref{a2}
are historically called Riemann--Liouville fractional integrals.

\begin{definition}[Left and right Riemann--Liouville fractional integrals]
Let $\alpha \in \mathbb{R}_+$.
The operators $_aJ^{\alpha}_x$ and $_xJ^{\alpha}_b$, defined on $L_1([a,b])$ by
\begin{equation}
\label{a3}
_aJ^{\alpha}_x f(x)
:=\frac{1}{\Gamma(\alpha)}\int_{a}^x \frac{f(u)}{(x-u)^{1-\alpha}}du
\end{equation}
and
\begin{equation}
\label{a4}
_xJ^{\alpha}_b f(x)
:=\frac{1}{\Gamma(\alpha)}\int_x^b \frac{f(u)}{(u-x)^{1-\alpha}}du,
\end{equation}
where $a,b\in \mathbb{R}$ with $a<b$, are called the left and the right
Riemann--Liouville fractional integrals of order $\alpha$, respectively.
\end{definition}

For an integer $\alpha$, the fractional Riemann--Liouville integrals
\eqref{a3} and \eqref{a4} coincide with the usual integer order
$n$-fold integration \eqref{a2}. Moreover, from definitions
\eqref{a3} and \eqref{a4}, it is easy to see that the Riemann--Liouville
fractional integrals converge for any integrable function $f$ if $\alpha>1$.
Furthermore, it is possible to prove the convergence of \eqref{a3} and \eqref{a4}
even when $0<\alpha<1$ \cite{Diethelm}.

The integration operators $_aJ^{\alpha}_x$ and $_xJ^{\alpha}_b$
play a fundamental role in the definition of Caputo and Riemann--Liouville fractional derivatives.
In order to define the Riemann--Liouville derivatives, we recall that, for positive integers $n>m$,
it follows the identity $D^m_x f(x)=D^{n}_x {_aJ}^{n-m}_x f(x)$,
where $D^m_x$ is the ordinary derivative $d^m/dx^m$ of order $m$.

\begin{definition}[Left and right Riemann--Liouville fractional derivatives]
The left and the right Riemann--Liouville fractional derivatives
of order $\alpha \in \mathbb{R}_+$ are defined,
respectively, by
$$
_aD^{\alpha}_x f(x) := D^{n}_x {_aJ}^{n-\alpha}_x f(x)
$$
and
$$
_xD^{\alpha}_b f(x) := (-1)^nD^{n}_x {_xJ}^{n-\alpha}_b f(x)
$$
with $n=[\alpha]+1$; that is,
\begin{equation}
\label{a5}
_aD^{\alpha}_x f(x)
:= \frac{1}{\Gamma(n-\alpha)}\frac{d^n}{dx^n}
\int_{a}^x \frac{f(u)}{(x-u)^{1+\alpha-n}}du
\end{equation}
and
\begin{equation}
\label{a6}
_xD^{\alpha}_b f(x)
:= \frac{(-1)^n}{\Gamma(n-\alpha)}\frac{d^n}{dx^n}
\int_{x}^b \frac{f(u)}{(u-x)^{1+\alpha-n}}du.
\end{equation}
\end{definition}

The Caputo fractional derivatives are defined
in a similar way as the Riemann--Liouville ones,
but exchanging the order between integration and differentiation.

\begin{definition}[Left and right Caputo fractional derivatives]
The left and the right Caputo fractional derivatives of order
$\alpha\in \mathbb{R}_+$ are defined, respectively,
by ${_a^C D}^{\alpha}_x f(x) := {_aJ}^{n-\alpha}_x D^{n}_x f(x)$
and ${_x^C}D^{\alpha}_b f(x) := (-1)^n _xJ^{n-\alpha}_b
D^{n}_x f(x)$ with $n=[\alpha]+1$; that is,
\begin{equation}
\label{a7}
{_a^C D}^{\alpha}_x f(x) := \frac{1}{\Gamma(n-\alpha)}
\int_{a}^x \frac{f^{(n)}(u)}{(x-u)^{1+\alpha-n}}du
\end{equation}
and
\begin{equation}
\label{a8}
{_x^C D}^{\alpha}_b f(x)
:= \frac{(-1)^n}{\Gamma(n-\alpha)}\int_{x}^b
\frac{f^{(n)}(u)}{(u-x)^{1+\alpha-n}}du,
\end{equation}
where $a \le x \le b$ and $f^{(n)}(u)=\frac{d^n f(u)}{du^n} \in L_1([a,b])$
is the ordinary derivative of integer order $n$.
\end{definition}

An important consequence of definitions \eqref{a5}--\eqref{a8}
is that the Riemann--Liouville and Caputo fractional derivatives
are nonlocal operators. The left (right) differ-integration operator \eqref{a5}
and \eqref{a7} (\eqref{a6} and \eqref{a8}) depend on the values of the function
at left (right) of $x$, i.e., $a\leq u \leq x$ ($x\leq u \leq b$).

\begin{remark}
When $\alpha$ is an integer, the Riemann--Liouville fractional derivatives
\eqref{a5} and \eqref{a6} reduce to ordinary derivatives of order $\alpha$.
On the other hand, in that case, the Caputo derivatives \eqref{a7} and \eqref{a8}
differ from integer order ones by a polynomial of order $\alpha -1$
{\rm \cite{Kilbas,Diethelm}}.
\end{remark}

\begin{remark}
For $0<\alpha<1$, and because they are given
by a first order derivative of a fractional integral,
the Riemann--Liouville fractional derivatives \eqref{a5} and \eqref{a6}
can be applied to non-differentiable functions.
One can even take Riemann--Liouville derivatives
of nowhere differentiable functions,
like the Weierstrass function {\rm \cite{KolwankarGangal}}.
In fact, one of the main reasons for using fractional calculus
is the possibility to study non-differentiability
{\rm \cite{Jumarie,Xiong:Wang:FDA12}}.
\end{remark}

\begin{remark}
It is important to note that, while the Caputo derivatives
\eqref{a7} and \eqref{a8} of a constant are zero for all $\alpha>0$,
${_a^C D}^{\alpha}_x 1 = {_x^C D}^{\alpha}_b 1 = 0$,
the Riemann--Liouville derivatives \eqref{a5} and \eqref{a6}
are not zero for $\alpha \notin \mathbb{N}$:
\begin{equation}
\label{a10}
{_a D}^{\alpha}_x 1 = \frac{(x-a)^{-\alpha}}{\Gamma(1-\alpha)},
\quad {_x D}^{\alpha}_b 1 = \frac{(b-x)^{-\alpha}}{\Gamma(1-\alpha)}.
\end{equation}
For $\alpha \in \mathbb{N}$, the right hand sides of \eqref{a10} become equal
to zero due to the poles of the gamma function. Furthermore,
for the power function $(x-a)^{\beta}$ one has
\begin{equation*}
{_a D}^{\alpha}_x (x-a)^{\beta}
=\frac{\Gamma(\beta+1)}{\Gamma(\beta+1-\alpha)}(x-a)^{\beta-\alpha}.
\end{equation*}
\end{remark}

\begin{remark}
\label{r4}
Let $\alpha \in ]0,1[$.
If the Riemann--Liouville and the Caputo fractional derivatives of order $\alpha$ exist,
then they are connected with each other by the following relations:
$$
{_a^CD_x^\alpha}f(x)={_aD_x^\alpha}f(x)- \frac{f(a)}{\Gamma(1-\alpha)}(x-a)^{-\alpha}
$$
and
$$
{_x^CD_b^\alpha}f(x)={_xD_b^\alpha}f(x)-\frac{f(b)}{\Gamma(1-\alpha)}(b-x)^{-\alpha}.
$$
Therefore, if $f(a)=0$ ($f(b)=0$), then ${_a^C D}^{\alpha}_x f(x) = {_a D}^{\alpha}_x f(x)$
(${_x^C D}^{\alpha}_b f(x) = {_x D}^{\alpha}_b f(x)$) {\rm \cite{Kilbas,Diethelm}}.
\end{remark}

We make use of the following three properties
of the Riemann--Liouville and Caputo derivatives
(Theorems~\ref{thm:ml:01}, \ref{thm:3} and \ref{thm:ml:03})
in the proofs of our results.

\begin{theorem}[see, e.g., \cite{Kilbas,Diethelm}]
\label{thm:ml:01}
Let $\alpha>0$. For every $f\in L_1([a,b])$ one has
\begin{equation}
\label{a11}
{_a D}^{\alpha}_x {_a J}^{\alpha}_x f(x) = f(x),
\quad {_x D}^{\alpha}_b {_x J}^{\alpha}_b f(x) = f(x)
\end{equation}
and
\begin{equation}
\label{a12}
{_a^C D}^{\alpha}_x {_a J}^{\alpha}_x f(x) = f(x),
\quad {_x^C D}^{\alpha}_b {_x J}^{\alpha}_b f(x) = f(x),
\end{equation}
almost everywhere.
\end{theorem}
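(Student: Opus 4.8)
The plan is to prove the two identities \eqref{a11} and \eqref{a12} separately, and in each case only the ``left'' version, since the ``right'' one follows from it by the reflection $x\mapsto a+b-x$, which interchanges the left and right operators. Both identities rest on two classical facts that I would establish (or quote) first: the \emph{semigroup law} for Riemann--Liouville integrals, ${_aJ}^{\mu}_x\,{_aJ}^{\nu}_x f={_aJ}^{\mu+\nu}_x f$ almost everywhere for $\mu,\nu>0$ and $f\in L_1([a,b])$, which is Fubini's theorem combined with the Beta-integral $\int_u^x(x-t)^{\mu-1}(t-u)^{\nu-1}\,dt=B(\mu,\nu)(x-u)^{\mu+\nu-1}$; and the \emph{fundamental theorem of calculus for Lebesgue integrals}, $D^{m}_x\,{_aJ}^{m}_x f=f$ almost everywhere for $m\in\mathbb{N}$ and $f\in L_1$, obtained by iterating the Lebesgue differentiation theorem (each intermediate ${_aJ}^{k}_x f$, $1\le k\le m$, is at least continuous, so the $m$-fold classical derivative makes sense).

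For \eqref{a11}, set $n=[\alpha]+1$ and unfold definition \eqref{a5}:
\[
{_aD}^{\alpha}_x\,{_aJ}^{\alpha}_x f(x)=D^{n}_x\,{_aJ}^{n-\alpha}_x\,{_aJ}^{\alpha}_x f(x)=D^{n}_x\,{_aJ}^{n}_x f(x)=f(x)
\]
almost everywhere, where the middle step is the semigroup law with $\mu=n-\alpha>0$, $\nu=\alpha>0$, and the last step is the fundamental theorem of calculus. This is the easy half.

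For \eqref{a12}, unfold definition \eqref{a7}: ${_a^C D}^{\alpha}_x\,{_aJ}^{\alpha}_x f={_aJ}^{n-\alpha}_x\,D^{n}_x\,{_aJ}^{\alpha}_x f$. First I would show $D^{k}_x\,{_aJ}^{\alpha}_x f={_aJ}^{\alpha-k}_x f$ for $0\le k\le n-1$ (again the semigroup law, legitimate since $\alpha-k>0$ in this range, together with the FTC), so that each ``initial value'' $\bigl(D^{k}_x\,{_aJ}^{\alpha}_x f\bigr)(a)$ vanishes. Then, using the general relation between the Caputo and Riemann--Liouville derivatives of order $\alpha$ that underlies Remark~\ref{r4}, namely ${_a^C D}^{\alpha}_x g={_aD}^{\alpha}_x\bigl[g-\sum_{k=0}^{n-1}\tfrac{g^{(k)}(a)}{k!}(x-a)^{k}\bigr]$, the polynomial-correction terms all drop out when $g={_aJ}^{\alpha}_x f$, leaving ${_a^C D}^{\alpha}_x\,{_aJ}^{\alpha}_x f={_aD}^{\alpha}_x\,{_aJ}^{\alpha}_x f$, which equals $f$ by \eqref{a11}.

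The main obstacle is exactly the claim that those boundary terms vanish: for a function merely in $L_1$, $\lim_{x\to a^{+}}{_aJ}^{\beta}_x f(x)$ need not be $0$ when $\beta\in(0,1)$ (for instance $f(u)=(u-a)^{-\beta}\in L_1$ gives the constant $\Gamma(1-\beta)$), so \eqref{a12} is really clean only under a mild additional assumption on $f$ near $a$ --- boundedness, continuity, or membership in the range of some ${_aJ}^{\varepsilon}_x$ --- which is precisely the setting in which the cited references \cite{Kilbas,Diethelm} state this composition rule. Under any such assumption the estimate $|{_aJ}^{\beta}_x f(x)|\le\|f\|_{\infty}(x-a)^{\beta}/\Gamma(\beta+1)\to 0$ closes the gap. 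Alternatively one can bypass Remark~\ref{r4} altogether and evaluate ${_aJ}^{n-\alpha}_x\,D^{n}_x\,{_aJ}^{\alpha}_x f$ directly by $n$ successive integrations by parts: the vanishing of the very same boundary contributions is what makes the sum telescope down to $f$.
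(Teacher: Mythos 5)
Your argument is correct in substance, and it is worth noting that the paper itself supplies no proof of this theorem: it merely cites \cite{Kilbas,Diethelm} and remarks that \eqref{a11} ``is a direct consequence'' of the definitions. Your treatment of \eqref{a11} (semigroup law for ${_aJ}^{\mu}_x$ via Fubini and the Beta integral, followed by $D^n_x\,{_aJ}^n_x f=f$ a.e.) is exactly the standard route the paper is alluding to. More importantly, your reservation about \eqref{a12} is not a mere scruple but a genuine defect of the theorem as stated: for $0<\alpha<1$ take $f(u)=(u-a)^{-\alpha}\in L_1([a,b])$; then ${_aJ}^{\alpha}_x f\equiv\Gamma(1-\alpha)$ is constant, so ${_a^C D}^{\alpha}_x\,{_aJ}^{\alpha}_x f=0\neq f$, and the Caputo half of the claim fails for this $f$. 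So the hypothesis ``$f\in L_1$'' must indeed be strengthened near $a$ (boundedness, or $|f(x)|\le c(x-a)^{\beta}$ with $\beta>-\alpha$ as in Lemma~\ref{lemma:1}) for \eqref{a12} to hold; this is precisely the situation in the paper's only use of \eqref{a12}, in the proof of Lemma~\ref{lemma2}, where the function $f-K$ satisfies such a bound and Lemma~\ref{lemma:1} kills the boundary term, so the paper's main results are unaffected. One small caution on your write-up: the identity ${_a^C D}^{\alpha}_x g={_aD}^{\alpha}_x\bigl[g-\sum_{k=0}^{n-1}\tfrac{g^{(k)}(a)}{k!}(x-a)^{k}\bigr]$ is itself only equivalent to the paper's definition \eqref{a7} when $g$ is $n$ times differentiable with $g^{(n)}\in L_1$, which for $g={_aJ}^{\alpha}_x f$ again requires more than $f\in L_1$; your alternative route by $n$ direct integrations by parts avoids leaning on that equivalence and is the cleaner way to close the argument once the growth condition on $f$ is assumed.
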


\begin{theorem}[Fundamental theorem of Caputo calculus --- see, e.g., \cite{Kilbas,Diethelm}]
\label{thm:3}
Let $0<\alpha<1$ and $f$ be a differentiable function in $[a,b]$.
The following two equalities hold:
\begin{equation}
\label{a13}
{_a J}^{\alpha}_b {_a^C D}^{\alpha}_x f(x) = f(b)-f(a)
\end{equation}
and
\begin{equation}
\label{a14}
{_b J}^{\alpha}_a {_x^C D}^{\alpha}_b f(x) = f(a)-f(b).
\end{equation}
\end{theorem}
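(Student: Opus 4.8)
The plan is to establish \eqref{a13} straight from the definitions; equation \eqref{a14} will then follow from an entirely analogous computation, the only difference being the sign produced by the factor $(-1)^n$ in \eqref{a8}. Since $0<\alpha<1$ we have $n=[\alpha]+1=1$, so by \eqref{a7} the Caputo derivative is just the left Riemann--Liouville integral of the ordinary derivative,
$$
{_a^C D}^{\alpha}_x f(x) = {_aJ}^{1-\alpha}_x f'(x)
= \frac{1}{\Gamma(1-\alpha)}\int_a^x \frac{f'(u)}{(x-u)^{\alpha}}\,du ,
$$
the integral converging for a.e.\ $x$ because $f$ is differentiable with $f'\in L_1([a,b])$, as required in the definition of the Caputo derivative. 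Reading $_aJ^{\alpha}_b g$ as the left integral \eqref{a3} evaluated at the endpoint $x=b$, the left-hand side of \eqref{a13} becomes
$$
{_aJ}^{\alpha}_b\,{_a^C D}^{\alpha}_x f
= \frac{1}{\Gamma(\alpha)\Gamma(1-\alpha)}
\int_a^b \frac{1}{(b-x)^{1-\alpha}}
\left( \int_a^x \frac{f'(u)}{(x-u)^{\alpha}}\,du \right) dx .
$$

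From here I would give the self-contained argument. First interchange the order of integration over the triangle $a\le u\le x\le b$; this is legitimate by Tonelli's theorem, since replacing $f'$ by $|f'|$ and performing the inner $x$-integral (done next) yields $\Gamma(\alpha)\Gamma(1-\alpha)\int_a^b|f'(u)|\,du<\infty$. We are left with
$$
\frac{1}{\Gamma(\alpha)\Gamma(1-\alpha)}\int_a^b f'(u)
\left( \int_u^b \frac{dx}{(b-x)^{1-\alpha}(x-u)^{\alpha}} \right) du .
$$
The inner integral is evaluated by the substitution $x=u+(b-u)t$, which turns it into the Beta integral $\int_0^1 t^{-\alpha}(1-t)^{\alpha-1}\,dt = B(\alpha,1-\alpha)=\Gamma(\alpha)\Gamma(1-\alpha)$, independently of $u$. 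Hence the whole expression collapses to $\int_a^b f'(u)\,du = f(b)-f(a)$ by the fundamental theorem of ordinary calculus, which is \eqref{a13}. (A shorter route is to invoke the semigroup law ${_aJ}^{\alpha}_x{_aJ}^{1-\alpha}_x={_aJ}^{1}_x$, giving ${_aJ}^{\alpha}_x\,{_a^C D}^{\alpha}_x f={_aJ}^{1}_x f'=f(x)-f(a)$, and then set $x=b$.)

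For \eqref{a14} I would repeat the computation with the right-sided operators \eqref{a4} and \eqref{a8}: now ${_x^C D}^{\alpha}_b f=-\,{_xJ}^{1-\alpha}_b f'$, and reading $_bJ^{\alpha}_a g$ as \eqref{a4} evaluated at $x=a$, the same Fubini step over $a\le x\le u\le b$ and the same Beta-function value $\Gamma(\alpha)\Gamma(1-\alpha)$ produce $-\int_a^b f'(u)\,du = f(a)-f(b)$. The only real subtlety in the whole proof is the justification of the interchange of the two integrals; that is the step I would be most careful about, and it is exactly where the hypotheses $0<\alpha<1$ (so that $t^{-\alpha}(1-t)^{\alpha-1}$ is integrable on $[0,1]$) and $f'\in L_1$ are used.
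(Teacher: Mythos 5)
Your proof is correct. The paper itself does not prove Theorem~\ref{thm:3}; it only cites \cite{Kilbas,Diethelm}, so there is no in-paper argument to compare against. Your Fubini-plus-Beta-function computation (equivalently, the semigroup identity ${_aJ}^{\alpha}_x\,{_aJ}^{1-\alpha}_x={_aJ}^{1}_x$ that you mention as the shorter route) is exactly the standard textbook derivation, and your Tonelli justification of the interchange is the right point to be careful about. The only remaining pedantic point is the final step $\int_a^b f'(u)\,du=f(b)-f(a)$, which for a Lebesgue integral needs $f$ absolutely continuous; this does follow from the stated hypotheses (everywhere differentiable with $f'\in L_1$ implies absolute continuity, by a classical theorem), but it is worth a word if you want the argument fully airtight.
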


\begin{theorem}[Integration by parts --- see, e.g., \cite{SKM}]
\label{thm:ml:03}
Let $0<\alpha<1$ and $f$ be a differentiable function in $[a,b]$.
For any function $g \in L_1([a,b])$ one has
\begin{equation}
\label{a15}
\int_{a}^{b} g(x) {_a^C D_x^{\alpha}} f(x)dx
= \int_a^b f(x) {_x D_b^{\alpha}} g(x)dx
+ \left[{_xD_b^{\alpha-1}} g(x)  f(x)\right]_a^b
\end{equation}
and
\begin{equation}
\label{a16}
\int_{a}^{b} g(x)  {_x^C D_b^\alpha} f(x)dx
=\int_a^b f(x) {_a D_x^\alpha} g(x) dx
+ \left[(-1)^{n}{_aD_x^{\alpha-1}} g(x) f(x)\right]_a^b.
\end{equation}
\end{theorem}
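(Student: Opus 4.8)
The plan is to reduce to the case $n=1$ (forced by $0<\alpha<1$), rewrite each Caputo derivative as a Riemann--Liouville integral of the classical derivative, transfer that integral operator onto the other factor by Fubini's theorem, and finish with an ordinary integration by parts. First I would record the elementary reformulations valid for $0<\alpha<1$: definitions \eqref{a7}--\eqref{a8} give ${_a^C D}^{\alpha}_x f = {_a J}^{1-\alpha}_x f'$ and ${_x^C D}^{\alpha}_b f = -\,{_x J}^{1-\alpha}_b f'$, while \eqref{a5}--\eqref{a6} give ${_a D}^{\alpha}_x g = \frac{d}{dx}\,{_a J}^{1-\alpha}_x g$ and ${_x D}^{\alpha}_b g = -\frac{d}{dx}\,{_x J}^{1-\alpha}_b g$; moreover ${_x D}^{\alpha-1}_b g = {_x J}^{1-\alpha}_b g$ and ${_a D}^{\alpha-1}_x g = {_a J}^{1-\alpha}_x g$, since a derivative of negative order is read as the fractional integral of the opposite order. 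These identifications turn the Caputo integration by parts into a statement about Riemann--Liouville integrals only.

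The crux is the adjointness of the left and right Riemann--Liouville integrals: for $\beta=1-\alpha\in\,]0,1[$ and $g,h\in L_1([a,b])$,
\[
\int_a^b g(x)\,\bigl({_a J}^{\beta}_x h\bigr)(x)\,dx
= \int_a^b h(u)\,\bigl({_x J}^{\beta}_b g\bigr)(u)\,du .
\]
I would prove this by Fubini's theorem on the triangle $\{a\le u\le x\le b\}$ applied to $\Gamma(\beta)^{-1}(x-u)^{\beta-1}g(x)h(u)$, the one thing to check being absolute integrability of this kernel over the triangle, which follows from $(x-u)^{\beta-1}$ being integrable in each variable for $\beta>0$ together with $g,h\in L_1$. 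Taking $h=f'$ then yields $\int_a^b g\,{_a^C D}^{\alpha}_x f = \int_a^b f'\,{_x J}^{1-\alpha}_b g$. An ordinary integration by parts (legitimate since $f$ is differentiable and ${_x J}^{1-\alpha}_b g$ is absolutely continuous on $[a,b]$) gives
\[
\int_a^b f'(u)\,\bigl({_x J}^{1-\alpha}_b g\bigr)(u)\,du
= \Bigl[f\,{_x J}^{1-\alpha}_b g\Bigr]_a^b
- \int_a^b f(u)\,\frac{d}{du}\bigl({_x J}^{1-\alpha}_b g\bigr)(u)\,du ,
\]
and by the reformulations above the last integrand is $-f\,{_x D}^{\alpha}_b g$ and the boundary term is $\bigl[{_x D}^{\alpha-1}_b g\,f\bigr]_a^b$, which is \eqref{a15}. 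Identity \eqref{a16} comes out symmetrically: start from ${_x^C D}^{\alpha}_b f = -\,{_x J}^{1-\alpha}_b f'$, use adjointness in the other direction to reach $\int_a^b g\,{_x^C D}^{\alpha}_b f = -\int_a^b f'\,{_a J}^{1-\alpha}_x g$, integrate by parts, and use $\frac{d}{du}{_a J}^{1-\alpha}_x g = {_a D}^{\alpha}_x g$ and $(-1)^n=-1$ to match the stated boundary term.

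The main obstacle will be making the two analytic steps rigorous for merely $L_1$ data: the Fubini exchange relies on the absolute-convergence estimate noted above, and the concluding classical integration by parts implicitly requires ${_x J}^{1-\alpha}_b g$ (respectively ${_a J}^{1-\alpha}_x g$) to be absolutely continuous --- equivalently, that the Riemann--Liouville derivative appearing on the right-hand side of \eqref{a15} (respectively \eqref{a16}) exists as an integrable function. Interpreted with that proviso, which is precisely how the identities are applied in Section~\ref{sec:4}, the argument presents no further difficulty.
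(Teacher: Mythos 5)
The paper offers no proof of Theorem~\ref{thm:ml:03} --- it is quoted from \cite{SKM} --- so there is nothing internal to compare against; your argument is the standard textbook one (reduce to $n=1$, write ${_a^C D}^{\alpha}_x f={_aJ}^{1-\alpha}_x f'$, transfer the Riemann--Liouville integral onto $g$ by Fubini, finish with a classical integration by parts), and the algebra checks out: both \eqref{a15} and \eqref{a16}, including the sign $(-1)^n=-1$ in the boundary term, come out exactly as stated. You also rightly flag that the concluding classical integration by parts requires ${_xJ}^{1-\alpha}_b g$ (resp.\ ${_aJ}^{1-\alpha}_x g$) to be absolutely continuous, i.e.\ that the Riemann--Liouville derivative on the right-hand side exists as an integrable function; that proviso is unavoidable given how loosely the theorem's hypotheses are stated.

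There is, however, one genuine gap: your justification of the Fubini step. You assert that absolute integrability of $(x-u)^{\beta-1}g(x)h(u)$ over the triangle $\{a\le u\le x\le b\}$ ``follows from $(x-u)^{\beta-1}$ being integrable in each variable together with $g,h\in L_1$.'' This is false for $\beta=1-\alpha<1$. Take $a=0$, $b=1$ and $g(x)=h(x)=|x-\tfrac12|^{-\gamma}$ with $\tfrac{1+\beta}{2}\le\gamma<1$: both lie in $L_1$, yet on the sub-triangle $u<\tfrac12<x$ the substitution $s=\tfrac12-u$, $t=x-\tfrac12$ reduces the double integral to $\int_0^{\epsilon}\!\int_0^{\epsilon}s^{-\gamma}t^{-\gamma}(s+t)^{\beta-1}\,ds\,dt$, which behaves like $\int_0^{\epsilon}\rho^{\beta-2\gamma}\,d\rho=\infty$ since $\beta-2\gamma\le-1$. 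For such data even the left-hand side of \eqref{a15} fails to converge absolutely, so the defect really sits in the theorem's hypothesis ``any $g\in L_1$'' (inherited from the paper), which your proof papers over with an incorrect one-line justification rather than repairing. The Fubini exchange is legitimate under the conditions of \cite{SKM} ($g\in L_p$, $f'\in L_q$ with $1/p+1/q\le 1+\beta$), and in particular whenever one of the two factors is essentially bounded, in which case $\int_a^b|f'(u)|\,{_uJ}^{\beta}_b|g|(u)\,du\le\|g\|_{\infty}\|f'\|_{L_1}(b-a)^{\beta}/\Gamma(\beta+1)<\infty$; this covers the only use the paper makes of the formula, in Section~\ref{sec:4}. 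State such a sufficient condition explicitly in place of the false claim and your proof is complete.
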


The proof of relations \eqref{a11}--\eqref{a14}
can be found in several textbooks --- see, e.g., \cite{Kilbas,Diethelm}.
Relation \eqref{a11} is a direct consequence of \eqref{a5}
and \eqref{a6}. Theorem~\ref{thm:3} is the generalization of the fundamental theorem
of calculus to the Caputo fractional calculus. It is important to mention that
\eqref{a13} and \eqref{a14} do not hold in the Riemann--Liouville approach.
Finally, we remark that the formulas of integration by parts \eqref{a15} and \eqref{a16}
relate Caputo left (right) derivatives to Riemann--Liouville right (left) derivatives.

% ------------------------------------

\section{The Fractional DuBois--Reymond Lemma}
\label{sec:3}

The fundamental problem of the fractional calculus of variations consists
to find a function $y$ that maximizes or minimizes a given functional $J$
\cite{MyID:208}. We consider a functional with a Lagrangian depending on
the independent variable $x$, function $y$ and its left Caputo
fractional derivative of order $0<\alpha<1$,
\begin{equation}
\label{b1}
J[y]=\int_a^b L\left(x,y,{_a^C D}^{\alpha}_x y\right) dx
= \Gamma(\alpha) {_a J}^{\alpha}_b \left[ (b-x)^{1-\alpha}
L\left(x,y,{_a^C D}^{\alpha}_x y\right)\right],
\end{equation}
where $_aJ_b^\alpha$ is seen as the left Riemann--Liouville fractional integral at $x=b$,
subject to the boundary conditions $y(a)=y_a$ and $y(b)=y_b$, $y_a,y_b\in \mathbb{R}$.
For $\alpha=1$, $J$ is a functional of the classical calculus of variations \cite{MR0160139}.
In order to obtain a necessary condition for the extremum of \eqref{b1},
we use the following two lemmas.

\begin{lemma}
\label{lemma:1}
Let $f\in L_1([a,b])$ and $0<\alpha<1$. If there is a number
$\varepsilon \in ]a,b] $ such that
$$
|f(x)|\leq c(x-a)^{\beta}
$$
for all $x\in [a,\varepsilon]$ with $c>0$ and $\beta>-\alpha$, then
\begin{equation}
\label{b1a}
\lim_{x\rightarrow a^+} {_a J}^{\alpha}_x f(x)=0.
\end{equation}
\end{lemma}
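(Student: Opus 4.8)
The plan is to estimate $\left|{_aJ}^{\alpha}_x f(x)\right|$ directly from the defining integral \eqref{a3}, insert the hypothesised growth bound on $f$, and show that the resulting majorant is a fixed constant times a strictly positive power of $x-a$; this forces the limit \eqref{b1a} to vanish.

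First I would note that, since only the behaviour as $x\to a^+$ matters, we may restrict to $x\in\,]a,\varepsilon]$, where the bound $|f(u)|\le c(u-a)^{\beta}$ holds for every $u\in[a,x]$. Substituting this into \eqref{a3} yields
$$
\left|{_aJ}^{\alpha}_x f(x)\right|
\le \frac{c}{\Gamma(\alpha)}\int_a^x \frac{(u-a)^{\beta}}{(x-u)^{1-\alpha}}\,du .
$$
Next I would evaluate the integral on the right via the change of variables $u=a+t(x-a)$, $t\in[0,1]$, which recasts it as $(x-a)^{\beta+\alpha}\int_0^1 t^{\beta}(1-t)^{\alpha-1}\,dt$. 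The integral $\int_0^1 t^{\beta}(1-t)^{\alpha-1}\,dt$ is the Euler Beta function $B(\beta+1,\alpha)=\Gamma(\beta+1)\Gamma(\alpha)/\Gamma(\beta+1+\alpha)$, and it is finite: convergence at $t=0$ needs $\beta+1>0$, which follows from $\beta>-\alpha>-1$ (as $0<\alpha<1$), while convergence at $t=1$ needs $\alpha>0$, true by hypothesis. Hence
$$
\left|{_aJ}^{\alpha}_x f(x)\right|
\le \frac{c\,\Gamma(\beta+1)}{\Gamma(\beta+1+\alpha)}\,(x-a)^{\beta+\alpha}.
$$

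Finally, since $\beta+\alpha>0$, the right-hand side tends to $0$ as $x\to a^+$, which establishes \eqref{b1a}. There is essentially no obstacle: the only points needing a little care are checking that the Beta integral converges (i.e. that both exponents $\beta$ and $\alpha-1$ exceed $-1$) and that the exponent $\beta+\alpha$ in the majorant is strictly positive, both immediate from $\beta>-\alpha$ and $0<\alpha<1$.
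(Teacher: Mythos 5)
Your proof is correct and follows essentially the same route as the paper: bound $|f(u)|$ by $c(u-a)^{\beta}$ inside the defining integral, recognize the resulting integral as a Beta function equal to $\frac{\Gamma(\alpha)\Gamma(\beta+1)}{\Gamma(\alpha+\beta+1)}(x-a)^{\alpha+\beta}$, and conclude from $\alpha+\beta>0$. Your added care in verifying convergence of the Beta integral is a welcome extra detail the paper leaves implicit.
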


\begin{proof}
From \eqref{a3} one has
\begin{equation*}
\left|\int_{a}^x \frac{f(u)}{(x-u)^{1-\alpha}}du\right|
\leq c\int_{a}^x \frac{(u-a)^{\beta}}{(x-u)^{1-\alpha}}du
=c\frac{\Gamma(\alpha)\Gamma(\beta+1)}{\Gamma(\alpha+\beta+1)}(x-a)^{\alpha+\beta}
\end{equation*}
for all $x\in [a,\varepsilon]$. Taking into account that $\alpha+\beta>0$,
it follows that
\begin{equation*}
\left|\lim_{x\rightarrow a^+}\int_{a}^x \frac{f(u)}{(x-u)^{1-\alpha}}du\right|
\leq c\frac{\Gamma(\alpha)\Gamma(\beta+1)}{\Gamma(\alpha+\beta+1)}
\lim_{x \rightarrow a^+} (x-a)^{\alpha+\beta}=0.
\end{equation*}
\end{proof}

\begin{remark}
If $0< \beta \leq 1$, then function $f$ of Lemma~\ref{lemma:1} is a H\"older function
of order $\beta$. If $\beta=0$, then $f$ is a simple bounded function.
\end{remark}

\begin{lemma}[The DuBois--Reymond fundamental lemma of the fractional calculus of variations]
\label{lemma2}
Let $g$ be a differentiable function in $[a,b]$ with $g(a)=g(b)=0$,
and let $f\in L_1([a,b])$ be such that there is a number $\varepsilon \in ]a,b]$
with $|f(x)|\leq c(x-a)^{\beta}$ for all $x\in [a,\varepsilon]$,
where $c>0$ and $\beta>-\alpha$ are constants. Then,
\begin{equation*}
{_a J}^{\alpha}_b \left(f(x) {_a^C D}^{\alpha}_x g(x)\right)=0
\Rightarrow f  \equiv K,
\end{equation*}
where $K$ is a constant.
\end{lemma}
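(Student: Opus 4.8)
The plan is to adapt the classical du Bois--Reymond argument. I read the hypothesis as asserting that ${_aJ_b^\alpha}\bigl(f(x)\,{_a^CD_x^\alpha}g(x)\bigr)=0$ for \emph{every} differentiable $g$ on $[a,b]$ with $g(a)=g(b)=0$, and I shall exhibit one such $g$ for which the identity degenerates to ``the integral of a nonnegative function vanishes''. The structural observation that makes this work is that the factor $(b-x)^{\alpha-1}$ built into ${_aJ_b^\alpha}$ cancels the integration operator ${_aJ_x^\alpha}$ exactly when $g$ is chosen so that ${_a^CD_x^\alpha}g=f-K$. Since ${_aJ_b^\alpha}1=(b-a)^{\alpha}/\Gamma(\alpha+1)\neq 0$, there is a unique $K\in\mathbb{R}$ with ${_aJ_b^\alpha}(f-K)=0$, namely the $(b-x)^{\alpha-1}$-weighted mean $K=\Gamma(\alpha+1)(b-a)^{-\alpha}\,{_aJ_b^\alpha}f$ (finite because $f\in L_1$ and $|f(x)|\le c(x-a)^\beta$ near $a$ with $\beta>-\alpha$); this is the constant in the conclusion.

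With $K$ fixed, set $g:={_aJ_x^\alpha}(f-K)$. By Theorem~\ref{thm:ml:01}, equation~\eqref{a12}, one has ${_a^CD_x^\alpha}g=f-K$ almost everywhere. The growth hypothesis gives $|f(x)-K|\le c(x-a)^\beta+|K|\le c'(x-a)^{\min\{\beta,0\}}$ on a right neighbourhood of $a$, with $\min\{\beta,0\}>-\alpha$, so Lemma~\ref{lemma:1} yields $g(a)=\lim_{x\to a^+}{_aJ_x^\alpha}(f-K)(x)=0$; and $g(b)={_aJ_b^\alpha}(f-K)=0$ directly from the choice of $K$ (equivalently, by Theorem~\ref{thm:3}, $g(b)-g(a)={_aJ_b^\alpha}{_a^CD_x^\alpha}g={_aJ_b^\alpha}(f-K)=0$). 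Hence $g$ is an admissible variation.

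Feeding this $g$ into the hypothesis,
\[
0 = {_aJ_b^\alpha}\bigl(f\cdot{_a^CD_x^\alpha}g\bigr)
  = \frac{1}{\Gamma(\alpha)}\int_a^b (b-x)^{\alpha-1}f(x)\bigl(f(x)-K\bigr)\,dx .
\]
Because $\int_a^b (b-x)^{\alpha-1}\bigl(f(x)-K\bigr)\,dx=\Gamma(\alpha)\,{_aJ_b^\alpha}(f-K)=0$, subtracting $K$ times this vanishing integral leaves $\int_a^b (b-x)^{\alpha-1}\bigl(f(x)-K\bigr)^2\,dx=0$; since $(b-x)^{\alpha-1}>0$ on $]a,b[$, the integrand is nonnegative and a.e.\ zero, so $f\equiv K$.

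The step I expect to be the main obstacle is the construction of $g$ --- specifically, making sure $g={_aJ_x^\alpha}(f-K)$ is a \emph{legitimate} test function. Its continuity on $[a,b]$ and its boundary values are delivered by Lemma~\ref{lemma:1} and the choice of $K$, but one still has to guarantee that $g$ is regular enough for ${_a^CD_x^\alpha}g$ to be defined and for Theorem~\ref{thm:3} to apply, i.e.\ that $g'\in L_1([a,b])$ (the fractional integral of an $L_1$ function behaving like $(x-a)^{\alpha-1}$ near $a$). This is precisely where the one-sided bound $\beta>-\alpha$ is indispensable: without it $g$ need not even stay bounded at $a$, and the whole scheme collapses. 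A further routine point is the convergence of the weighted integrals near $x=b$, which uses $f\in L_1$ together with the boundedness of ${_a^CD_x^\alpha}g$ there for differentiable $g$.
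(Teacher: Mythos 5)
Your proposal is correct and follows essentially the same route as the paper: the same choice $K=\Gamma(\alpha+1)(b-a)^{-\alpha}\,{_aJ_b^\alpha}f$, the same test function $g={_aJ_x^\alpha}(f-K)$ with $g(a)=0$ from Lemma~\ref{lemma:1}, and the same reduction to $\int_a^b (b-x)^{\alpha-1}(f-K)^2\,dx=0$ (the paper justifies the subtracted term via the fundamental theorem of Caputo calculus rather than directly from the choice of $K$, but these are the same computation). Your closing caveat about the differentiability of $g$ is a real point, but the paper itself only asserts it without proof, so you are not behind the published argument there.
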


\begin{proof}
For any constant $K$ we have
\begin{equation}
\label{b3}
\begin{split}
{_a J}^{\alpha}_b \left(\left(f(x)-K\right) {_a^C D}^{\alpha}_x g(x)\right)
&={_a J}^{\alpha}_b \left(f(x) {_a^C D}^{\alpha}_x g(x)\right)
-K {_a J}^{\alpha}_b {_a^C D}^{\alpha}_x g(x)\\
&= -K {_a J}^{\alpha}_b {_a^C D}^{\alpha}_x g(x)\\
&= - K\left(g(b)-g(a)\right)\\
&=0,
\end{split}
\end{equation}
where we used the fundamental theorem of Caputo calculus \eqref{a13}
and the hypothesis $g(a)=g(b)$. Let us choose
\begin{equation}
\label{b4}
g(x) := {_a J}^{\alpha}_x \left(f(x)-K\right).
\end{equation}
It can be seen that $g(x)$, defined by \eqref{b4}, is a differentiable function in $[a,b]$.
Furthermore, from \eqref{b1a} of Lemma~\ref{lemma:1}, we have $g(a)=0$ and, by choosing
\begin{equation*}
K=\frac{1}{{_a J}^{\alpha}_b 1} {_a J}^{\alpha}_b f
=\frac{\Gamma(\alpha+1)}{(b-a)^{\alpha}} {_a J}^{\alpha}_b f,
\end{equation*}
we also have $g(b)=0$. By inserting \eqref{b4} into \eqref{b3}, and using \eqref{a12}, we get
\begin{equation*}
{_a J}^{\alpha}_b \left(f(x)-K\right) {_a^C D}^{\alpha}_x g(x)
= {_a J}^{\alpha}_b \left(f(x)-K\right)^2
=\frac{1}{\Gamma(\alpha)}\int_a^b\frac{(f(u)-K)^2}{(b-u)^{1-\alpha}}du=0.
\end{equation*}
Since $b-x> 0$ for all $x\in [a,b[$, we conclude that
$f(x)=K$ for all $x\in [a,b]$.
\end{proof}

\begin{remark}
The fractional DuBois--Reymond lemma can be trivially formulated
for Riemann--Liouville derivatives instead of Caputo ones.
Indeed, ${_a^C D}^{\alpha}_x g(x)={_a D}^{\alpha}_x g(x)$ because $g(a)=0$,
as commented in Remark~\ref{r4}. Consequently, all results of the next section
can also be formulated for functionals depending on Riemann--Liouville derivatives.
\end{remark}

% ---------------------------------------------

\section{Fractional Euler--Lagrange Equations}
\label{sec:4}

The next Theorem gives a necessary condition for a function
$y$ to be an extremizer of the fractional variational
functional defined by \eqref{b1}.

\begin{theorem}[The fractional Euler--Lagrange equation in integral form]
\label{theormDR}
Let $J$ be a functional of the form
\begin{equation*}
J[y]=\int_a^b L\left(x,y,{_a^C D}^{\alpha}_x y\right) dx
=\Gamma(\alpha) {_a J}^{\alpha}_b
\left[ (b-x)^{1-\alpha} L\left(x,y,{_a^C D}^{\alpha}_x y\right)\right],
\end{equation*}
defined in the class of functions $y \in C^1([a,b])$
satisfying given boundary conditions  $y(a)=y_a$ and $y(b)=y_b$,
and where $L\in C^1([a,b[ \times \mathbb{R}^2)$
is differentiable with respect to all of its arguments.
If $y$ is an extremizer of $J$, then $y$ satisfies
the following fractional Euler--Lagrange integral equation:
\begin{equation}
\label{b9}
{_x J}^{\alpha}_b \frac{\partial L\left(x,y,{_a^C D}^{\alpha}_x y\right)}{\partial y}
+\frac{\partial L(x,y,{_a^C D}^{\alpha}_x y)}{\partial ({_a^C D}^{\alpha}_x y)}
=\frac{K}{(b-x)^{1-\alpha}}
\end{equation}
for all $x\in [a,b[$, where $K$ is a constant.
\end{theorem}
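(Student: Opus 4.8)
The plan is to compute the first variation of $J$ along admissible perturbations, transport the term containing $\partial L/\partial y$ onto the Caputo derivative of the perturbation by a Fubini interchange, and then invoke the fractional DuBois--Reymond lemma (Lemma~\ref{lemma2}).

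First I would fix $\eta\in C^1([a,b])$ with $\eta(a)=\eta(b)=0$ and use that $\epsilon\mapsto J[y+\epsilon\eta]$ is stationary at $\epsilon=0$. Since $L\in C^1$ one may differentiate under the integral sign, and since the Caputo derivative is linear one has $\frac{d}{d\epsilon}\big|_{\epsilon=0}{_a^C D}^\alpha_x(y+\epsilon\eta)={_a^C D}^\alpha_x\eta$; this gives
\[
\int_a^b\!\left(\frac{\partial L}{\partial y}\,\eta(x)+\frac{\partial L}{\partial({_a^C D}^\alpha_x y)}\,{_a^C D}^\alpha_x\eta(x)\right)dx=0 .
\]
Next I would rewrite the first summand. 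Because $\eta(a)=0$, Theorem~\ref{thm:3} applied on $[a,x]$ gives $\eta(x)={_aJ}^\alpha_x\,{_a^C D}^\alpha_x\eta(x)=\frac{1}{\Gamma(\alpha)}\int_a^x (x-t)^{\alpha-1}\,{_a^C D}^\alpha_t\eta(t)\,dt$. Substituting this and interchanging the order of integration over the triangle $\{a\le t\le x\le b\}$ (Fubini) converts $\int_a^b \frac{\partial L}{\partial y}\,\eta\,dx$ into $\int_a^b\big({_xJ}^\alpha_b \frac{\partial L}{\partial y}\big)\,{_a^C D}^\alpha_x\eta(x)\,dx$, the inner integral $\frac{1}{\Gamma(\alpha)}\int_t^b (x-t)^{\alpha-1}\frac{\partial L}{\partial y}(x)\,dx$ being exactly the right Riemann--Liouville integral \eqref{a4} at $t$. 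Hence the stationarity condition becomes
\[
\int_a^b\!\left({_xJ}^\alpha_b \frac{\partial L}{\partial y}+\frac{\partial L}{\partial({_a^C D}^\alpha_x y)}\right){_a^C D}^\alpha_x\eta(x)\,dx=0 \quad\text{for all admissible }\eta .
\]

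Then I would recast this identity as a left Riemann--Liouville integral evaluated at $b$. Setting
\[
f(x):=\Gamma(\alpha)\,(b-x)^{1-\alpha}\!\left({_xJ}^\alpha_b \frac{\partial L}{\partial y}+\frac{\partial L}{\partial({_a^C D}^\alpha_x y)}\right),
\]
one checks directly from \eqref{a4} (read at $x=b$, the weight $(b-x)^{1-\alpha}$ cancelling) that the integral above equals ${_aJ}^\alpha_b\big(f(x)\,{_a^C D}^\alpha_x\eta(x)\big)$, so ${_aJ}^\alpha_b\big(f(x)\,{_a^C D}^\alpha_x\eta(x)\big)=0$ for every $\eta\in C^1([a,b])$ with $\eta(a)=\eta(b)=0$. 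Applying Lemma~\ref{lemma2} with $g=\eta$ then forces $f\equiv K$ for a constant $K$; dividing by $\Gamma(\alpha)(b-x)^{1-\alpha}$ and absorbing $\Gamma(\alpha)$ into the constant yields precisely the Euler--Lagrange integral equation \eqref{b9} on $[a,b[$.

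The hard part will be checking that $f$ meets the hypotheses of Lemma~\ref{lemma2}: that $f\in L_1([a,b])$ and $|f(x)|\le c\,(x-a)^\beta$ near $a$ for some $\beta>-\alpha$. Since $y\in C^1([a,b])$ has bounded derivative, the estimate $|{_a^C D}^\alpha_x y(x)|\le \frac{\|y'\|_\infty}{\Gamma(2-\alpha)}(x-a)^{1-\alpha}$ shows ${_a^C D}^\alpha_x y$ is bounded near $a$ (indeed vanishes there), so $\partial L/\partial({_a^C D}^\alpha_x y)$ stays bounded near $a$; likewise $(b-x)^{1-\alpha}$ is bounded there, and ${_xJ}^\alpha_b(\partial L/\partial y)$ stays bounded as $x\to a^+$ provided $\partial L/\partial y$ evaluated along $y$ is integrable on $[a,b]$, which is implicit in the problem being well posed. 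Hence $f$ is bounded near $a$ ($\beta=0$ works) and lies in $L_1([a,b])$, and the Fubini interchange used above is justified by the same integrability. Granting these verifications, Lemma~\ref{lemma2} applies and the proof is complete.
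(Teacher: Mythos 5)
Your proposal is correct and follows essentially the same route as the paper: compute the first variation, move the $\eta\,\partial L/\partial y$ term onto ${_a^C D}^{\alpha}_x\eta$, recast the result as ${_aJ}^{\alpha}_b$ of a product, and invoke Lemma~\ref{lemma2} with $\beta=0$. The only (cosmetic) difference is in the middle step: the paper obtains identity \eqref{b13} by writing $\partial L/\partial y={_xD}^{\alpha}_b\,{_xJ}^{\alpha}_b\,\partial L/\partial y$ and citing the integration-by-parts formula \eqref{a15}, whereas you re-derive the same identity from $\eta={_aJ}^{\alpha}_x\,{_a^C D}^{\alpha}_x\eta$ (valid since $\eta(a)=0$) together with a Fubini interchange --- which is just a self-contained proof of that same integration-by-parts formula.
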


\begin{proof}
Let $y^{\ast}$ give an extremum to \eqref{b1}.
We define a family of functions
\begin{equation}
\label{b10}
y(x)=y^{\ast}(x)+\epsilon \eta(x),
\end{equation}
where $\epsilon$ is a constant and $\eta \in C^1([a,b])$
is an arbitrary continuously differentiable function
satisfying the boundary conditions
$\eta(a)=\eta(b)=0$ (weak variations).
From \eqref{b10} and the boundary conditions $\eta(a)=\eta(b)=0$
and $y^{\ast}(a)=y_a$, $y^{\ast}(b)=y_b$,
it follows that function $y$ is admissible:
$y \in C^1([a,b])$ with $y(a)=y_a$ and $y(b)=y_b$.
Let the Lagrangian $L$ be $C^1([a,b[ \times \mathbb{R}^2)$.
Because $y^{\ast}$ is an extremizer of functional $J$,
the Gateaux derivative $\delta J[y^{\ast}]$
needs to be identically null. For the functional \eqref{b1},
\begin{equation}
\label{b12}
\begin{split}
\delta J[y^{\ast}]&=\lim_{\epsilon\rightarrow 0}
\frac{1}{\epsilon}\left( \int_a^b L\left(x,y,{_a^C D}^{\alpha}_x y\right)dx
-\int_a^b L(x,y^{\ast},{_a^C D}^{\alpha}_x y^{\ast})dx\right)\\
&=\int_a^b \left(\eta(x)\frac{\partial L(x,y^{\ast},
{_a^C D}^{\alpha}_x y^{\ast})}{\partial y^{\ast}}
+{_a^C D}^{\alpha}_x\eta(x)\frac{\partial L(x,y^{\ast},
{_a^C D}^{\alpha}_x y^{\ast})}{\partial ({_a^C D}^{\alpha}_x y^{\ast})} \right)dx\\
&=0.
\end{split}
\end{equation}
By using \eqref{a4} and relations \eqref{a11} and \eqref{a15}, we get
\begin{equation}
\label{b13}
\begin{split}
\int_a^b \eta(x)\frac{\partial L(x,y^{\ast},
{_a^C D}^{\alpha}_x y^{\ast})}{\partial y^{\ast}}dx
&=\int_a^b \eta(x) {_x D}^{\alpha}_b {_x J}^{\alpha}_b\frac{\partial
L(x,y^{\ast},{_a^C D}^{\alpha}_x y^{\ast})}{\partial y^{\ast}}dx \\
&=\int_a^b {_a^C D}^{\alpha}_x\eta(x) {_x J}^{\alpha}_b\frac{\partial
L\left(x,y^{\ast}, {_a^C D}^{\alpha}_x y^{\ast}\right)}{\partial y^{\ast}}dx.
\end{split}
\end{equation}
Inserting \eqref{b13} into \eqref{b12}, and using the definition \eqref{a3}
of left Riemann--Liouville fractional integral,
we obtain for the first variation the following expression:
\begin{equation*}
\begin{split}
\delta J[y^{\ast}] &= \int_a^b {_a^C D}^{\alpha}_x\eta(x) \left(
{_x J}^{\alpha}_b\frac{\partial L(x,y^{\ast},
{_a^C D}^{\alpha}_x y^{\ast})}{\partial y^{\ast}}
+\frac{\partial L(x,y^{\ast},{_a^C D}^{\alpha}_x y^{\ast})}{
\partial ({_a^C D}^{\alpha}_x y^{\ast})} \right) dx \\
&=\Gamma(\alpha) {_a J}^{\alpha}_b \left[{_a^C D}^{\alpha}_x\eta(x) \left(
{_x J}^{\alpha}_b\frac{\partial L(x,y^{\ast},
{_a^C D}^{\alpha}_x y^{\ast})}{\partial y^{\ast}}
+\frac{\partial L(x,y^{\ast},{_a^C D}^{\alpha}_x y^{\ast})}{\partial
\left({_a^C D}^{\alpha}_x y^{\ast}\right)} \right) (b-x)^{1-\alpha}\right]=0.
\end{split}
\end{equation*}
The fractional Euler--Lagrange equation \eqref{b9}
follows from Lemma~\ref{lemma2}. Note that
the hypothesis $|f(x)|\leq |c| (x-a)^{\beta}$ with $\beta>-\alpha$
is satisfied because $L\in C^1\left([a,b[ \times \mathbb{R}^2\right)$ and $y \in C^1([a,b])$.
Indeed, in our case $f(x)=\left({_x J}^{\alpha}_b\frac{\partial L(x,y^{\ast},
{_a^C D}^{\alpha}_x y^{\ast})}{\partial y^{\ast}}
+\frac{\partial L(x,y^{\ast},{_a^C D}^{\alpha}_x y^{\ast})}{\partial
\left({_a^C D}^{\alpha}_x y^{\ast}\right)} \right) (b-x)^{1-\alpha}$
tends to a number $c$ when $x \rightarrow a$. Therefore, $\beta=0>-\alpha$
and we are in conditions to apply Lemma~\ref{lemma2}.
\end{proof}

It is important to mention that in all previous approaches
to the fractional calculus of variations,
one eliminates the fractional derivative
of function $\eta$ appearing in the second integrand of \eqref{b12}
by performing an integration by parts \cite{MyID:208}.
However, from the integration by parts \eqref{a15},
this procedure gives an Euler--Lagrange fractional differential equation involving
Riemann--Liouville derivatives (see \cite{AMT} and references therein):
\begin{equation}
\label{b15}
\frac{\partial L\left(x,y,{_a^C D}^{\alpha}_x y\right)}{\partial y}
+{_x D}^{\alpha}_b\frac{\partial L\left(x,y,{_a^C D}^{\alpha}_x y\right)}{\partial
({_a^C D}^{\alpha}_x y)} = 0.
\end{equation}
In contrast, in the proof of Theorem~\ref{theormDR}
we take a different procedure by computing an integration by parts
on the fist integrand instead of the second one.
The new fractional DuBois--Reymond lemma takes then a prominent role.

One can obtain the Euler--Lagrange equation \eqref{b15} from our result,
by taking the right Riemann--Liouville derivative to both sides of \eqref{b9}:
\begin{equation}
\label{eq:myq2}
{_x D}^{\alpha}_b {_x J}^{\alpha}_b\frac{\partial L\left(x,y,{_a^C D}^{\alpha}_x y\right)}{\partial y}
+ {_x D}^{\alpha}_b\frac{\partial L\left(x,y,{_a^C D}^{\alpha}_x y\right)}{\partial({_a^C D}^{\alpha}_x y)}
=K {_x D}^{\alpha}_b\frac{1}{(b-x)^{1-\alpha}}.
\end{equation}
The Euler--Lagrange equation \eqref{b15} follows from \eqref{eq:myq2}
by Theorem~\ref{thm:ml:01} and the well-known equality
${_x D}^{\alpha}_b (b-x)^{\alpha - 1} = 0$
(see, e.g., Property~2.1 of \cite{Kilbas}).
A more important consequence from our optimality condition \eqref{b9}, however,
is that we can obtain a new fractional Euler--Lagrange differential equation
involving only Caputo derivatives.

\begin{theorem}[The Euler--Lagrange equation with only Caputo derivatives]
\label{cor:mr}
Consider the problem of extremizing \eqref{b1}
with a Lagrangian $L\in C^2([a,b]\times \mathbb{R}^2)$
subject to boundary conditions $y(a)=y_a$ and $y(b)=y_b$.
If $y \in C^1([a,b])$ is a solution to this problem, then $y$ satisfies
the fractional Euler--Lagrange differential equation
\begin{equation}
\label{b16}
\frac{\partial L\left(x,y,{_a^C D}^{\alpha}_x y\right)}{\partial y}
+ {_x^C D}^{\alpha}_b\frac{\partial L\left(x,y,{_a^C
D}^{\alpha}_x y\right)}{\partial ({_a^C D}^{\alpha}_x y)} = 0.
\end{equation}
\end{theorem}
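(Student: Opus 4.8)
The plan is to convert the integral-form optimality condition \eqref{b9} of Theorem~\ref{theormDR} into the differential equation \eqref{b16} by applying the right Caputo derivative ${_x^C D}^{\alpha}_b$ to both sides, after first determining the constant $K$. Write $\psi(x):=\frac{\partial L}{\partial y}\bigl(x,y(x),{_a^C D}^{\alpha}_x y(x)\bigr)$ and $\phi(x):=\frac{\partial L}{\partial({_a^C D}^{\alpha}_x y)}\bigl(x,y(x),{_a^C D}^{\alpha}_x y(x)\bigr)$. Since $y\in C^1([a,b])$, the map $x\mapsto {_a^C D}^{\alpha}_x y(x)={_aJ}^{1-\alpha}_x y'(x)$ is continuous on the closed interval $[a,b]$, so the curve $x\mapsto\bigl(x,y(x),{_a^C D}^{\alpha}_x y(x)\bigr)$ has compact image; because $L\in C^2([a,b]\times\mathbb{R}^2)$, both $\psi$ and $\phi$ are then continuous, hence bounded, on all of $[a,b]$ — and it is precisely here that the closed domain $[a,b]$ (in contrast with $[a,b[$ in Theorem~\ref{theormDR}) is used. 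A direct estimate in the spirit of the proof of Lemma~\ref{lemma:1} gives $\bigl|{_xJ}^{\alpha}_b\psi(x)\bigr|\le\|\psi\|_{\infty}(b-x)^{\alpha}/\Gamma(\alpha+1)$, so the left-hand side of \eqref{b9} stays bounded on $[a,b[$. Since $(b-x)^{\alpha-1}\to+\infty$ as $x\to b^{-}$, the right-hand side $K(b-x)^{\alpha-1}$ can remain bounded only if $K=0$.

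Once $K=0$ is established, \eqref{b9} reduces to $\phi(x)=-{_xJ}^{\alpha}_b\psi(x)$ on $[a,b[$ (and, by continuity, on $[a,b]$). It then suffices to apply ${_x^C D}^{\alpha}_b$ to this identity. On the right, since $\psi\in L_1([a,b])$, the cancellation law \eqref{a12} of Theorem~\ref{thm:ml:01} gives ${_x^C D}^{\alpha}_b{_xJ}^{\alpha}_b\psi=\psi$ almost everywhere, so ${_x^C D}^{\alpha}_b\phi=-\psi$, that is,
\begin{equation*}
\frac{\partial L\left(x,y,{_a^C D}^{\alpha}_x y\right)}{\partial y}+{_x^C D}^{\alpha}_b\frac{\partial L\left(x,y,{_a^C D}^{\alpha}_x y\right)}{\partial({_a^C D}^{\alpha}_x y)}=0,
\end{equation*}
which is \eqref{b16}. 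This route also shows that ${_x^C D}^{\alpha}_b\phi$ is meaningful: although $\phi$ need not be classically differentiable, it coincides with ${_xJ}^{\alpha}_b$ of a continuous function, for which Theorem~\ref{thm:ml:01} already guarantees the existence (a.e.) of the Caputo derivative; the $C^2$ regularity of $L$ is what keeps us within the scope of that theorem.

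I expect the main obstacle to be the justification that $K=0$: one must be certain that \emph{every} term on the left of \eqref{b9}, evaluated along the extremal, remains bounded up to and including $x=b$. Applying ${_x^C D}^{\alpha}_b$ directly to \eqref{b9} \emph{without} first disposing of $K$ is not an option, because — unlike the right Riemann--Liouville derivative, for which the identity ${_xD}^{\alpha}_b(b-x)^{\alpha-1}=0$ was used to pass from \eqref{b9} to \eqref{b15} — the Caputo derivative ${_x^C D}^{\alpha}_b(b-x)^{\alpha-1}$ fails to exist, since $(b-x)^{\alpha-1}$ is not differentiable at $b$ and its derivative is not integrable there. Hence the preliminary step $K=0$ is not a technicality but the crux of the argument; after it, the computation is immediate.
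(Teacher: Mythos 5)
Your proposal is correct and follows essentially the same route as the paper: first force $K=0$ by comparing the boundedness of the left-hand side of \eqref{b9} up to $x=b$ (using $L\in C^2([a,b]\times\mathbb{R}^2)$ and $y\in C^1([a,b])$) with the divergence of $K(b-x)^{\alpha-1}$, and then apply ${_x^C D}^{\alpha}_b$ to the reduced identity and invoke \eqref{a12} of Theorem~\ref{thm:ml:01}. Your closing remark on why ${_x^C D}^{\alpha}_b(b-x)^{\alpha-1}$ is not available (in contrast with the Riemann--Liouville identity used to reach \eqref{b15}) is a sound clarification of why the step $K=0$ must come first, consistent with the paper's reasoning.
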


\begin{proof}
The optimality condition \eqref{b16} is obtained
taking the right Caputo fractional derivative \eqref{a8}
to both sides of \eqref{b9}.
Taking the limit $x\rightarrow b$ on the left-hand side of \eqref{b9},
and having in mind that $L \in C^2([a,b]\times \mathbb{R}^2)$ and $y\in C^1([a,b])$,
we obtain the quantity $\frac{\partial L\left(b,y(b),{_a^C D}^{\alpha}_b y(b)\right)}{\partial
({_a^C D}^{\alpha}_b y)}$. On the other hand, the right-hand side of \eqref{b9} diverges when
$x\rightarrow b$ if $K\neq 0$. We conclude that, for $L\in C^2\left([a,b]\times \mathbb{R}^2\right)$,
one has $K=0$ and $\frac{\partial L\left(b,y(b),{_a^C D}^{\alpha}_b y(b)\right)}{\partial
({_a^C D}^{\alpha}_b y)}=0$ (this is a necessary condition for the Lagrangian $L$ and the solution $y$
to be nonsingular at $x=b$). Finally, we obtain \eqref{b16} by computing the Caputo derivative
of the right-hand side of \eqref{b9} and using Theorem~\ref{thm:ml:01}.
\end{proof}

From the well-known relations between the Riemann--Liouville and Caputo fractional derivatives
(see Remark~\ref{r4}), the Euler--Lagrange equation in terms of Caputo fractional derivative \eqref{b16}
can be derived from the Euler--Lagrange equation in terms of Riemann--Liouville
fractional derivative \eqref{b15} by assuming that
$\frac{\partial L\left(b,y(b),{_a^C D}^{\alpha}_b y(b)\right)}{\partial ({_a^C D}^{\alpha}_b y)}=0$.
Here, we do not assume a priori that
$\frac{\partial L\left(b,y(b),{_a^C D}^{\alpha}_b y(b)\right)}{\partial ({_a^C D}^{\alpha}_b y)}=0$,
showing it as a consequence of the new fractional Euler--Lagrange equation in integral form \eqref{b9}.
The Euler--Lagrange equation \eqref{b16}, involving only Caputo derivatives,
is valid for regular boundary conditions. Consequently, equation \eqref{b16}
should be better suited to applications in physics,
sciences and engineering than the fractional Euler--Lagrange equation \eqref{b15}
with mixed Riemann--Liouville and Caputo fractional derivatives \cite{HerrmannBook}.

% ------------------------------------

\section{Conclusion}
\label{sec:5}

We generalized one of the most important lemmas of the calculus of variations,
the DuBois--Reymond fundamental lemma of variational calculus,
to functionals depending on fractional derivatives (Lemma~\ref{lemma2}).
The new lemma enabled us to prove a fractional Euler--Lagrange equation in integral form
containing fractional derivatives of only one type (Theorem~\ref{theormDR}).
Furthermore, we also showed that, when the Lagrangian is a $C^2$ function,
one can then obtain a fractional Euler--Lagrange differential equation
depending only on Caputo derivatives (Theorem~\ref{cor:mr}).
This is an important result because differential equations involving only Caputo derivatives are,
in general, better suited to applications in physics, sciences and engineering,
than a fractional differential equation involving
both Caputo and Riemann--Liouville derivatives \cite{HerrmannBook}.

% ------------------------------------

\section*{Acknowledgments}

The authors are grateful to two referees for their valuable
comments and helpful suggestions.

% ------------------------------------

\small

% ------------------------------------

% ------------------------------------


\begin{thebibliography}{xx}

\bibitem{OldhamSpanier}
Oldham, K.B., Spanier, J.:
The Fractional Calculus.
Academic Press, New York (1974)

\bibitem{SATM}
Sabatier, J., Agrawal, O.P., Tenreiro Machado, J.A.:
Advances in Fractional Calculus.
Springer, Dordrecht (2007)

\bibitem{Kilbas}
Kilbas, A.A., Srivastava, H.M., Trujillo, J.J.:
Theory and Applications of Fractional Differential Equations.
North-Holland Mathematics Studies, 204, Elsevier, Amsterdam (2006)

\bibitem{Hilfer}
Hilfer, R.:
Applications of Fractional Calculus in Physics.
World Sci. Publishing, River Edge, NJ (2000)

\bibitem{Magin}
Magin, R.L.:
Fractional Calculus in Bioengineering.
Begell House Publishers, Redding, CT (2006)

\bibitem{HerrmannBook}
Herrmann, R.:
Fractional Calculus: An Introduction for Physicists.
World Sci. Publishing, Singapore (2011)

\bibitem{Metzler2}
Metzler, R., Klafter, J.:
The restaurant at the end of the random walk: recent developments
in the description of anomalous transport by fractional dynamics,
J. Phys. A {\bf 37}, R161--R208 (2004)

\bibitem{Klages}
Klages, R., Radons, G., Sokolov, I.M.:
Anomalous Transport: Foundations and Applications.
Wiley-VCH, Weinheim (2007)

\bibitem{Laskin2}
Laskin, N.:
Fractional Schr\"odinger equation,
Phys. Rev. E (3) {\bf 66}, 056108, 7~pp (2002)
{\tt arXiv:quant-ph/0206098}

\bibitem{Naber}
Naber, M.:
Time fractional Schr\"odinger equation,
J. Math. Phys. {\bf 45}, 3339--3352 (2004)
{\tt arXiv:math-ph/0410028}

\bibitem{Iomin}
Iomin, A.:
Accelerator dynamics of a fractional kicked rotor,
Phys. Rev. E {\bf 75}, 037201, 4~pp (2007)
{\tt arXiv:nlin/0609036}

\bibitem{Tarasov1}
Tarasov, V.E.:
Fractional Heisenberg equation,
Phys. Lett. A {\bf 372}, 2984--2988 (2008)

\bibitem{KP}
Ketov, S.V., Prager, Ya.S.:
On the ``square root'' of the Dirac equation within extended supersymmetry,
Acta Phys. Polon. B {\bf 21}, 463--467 (1990)

\bibitem{Zavada}
Z\'avada, P.:
Relativistic wave equations with fractional derivatives
and pseudodifferential operators,
J. Appl. Math. {\bf 2}, 163--197 (2002)
{\tt arXiv:hep-th/0003126}

\bibitem{MAB}
Muslih, S.I., Agrawal, O.P., Baleanu, D.:
A fractional Dirac equation and its solution,
J. Phys. A {\bf 43}, 055203, 13~pp (2010)

\bibitem{Tarasov2}
Tarasov, V.E.:
Fractional vector calculus and fractional Maxwell's equations,
Ann. Physics {\bf 323}, 2756--2778 (2008)

\bibitem{Herrmann}
Herrmann, R.:
Gauge invariance in fractional field theories,
Phys. Lett. A {\bf 372}, 5515--5522 (2008)

\bibitem{Lazo}
Lazo, M.J.:
Gauge invariant fractional electromagnetic fields,
Phys. Lett. A {\bf 375}, 3541--3546 (2011)
{\tt arXiv:1108.3493}

\bibitem{Munkhammar}
Munkhammar, J.:
Riemann-Liouville fractional Einstein field equations,
{\tt arXiv:1003.4981} [physics.gen-ph] (2010)

\bibitem{NM}
Nigmatullin, R.R., Le Mehaute, A.:
Is there geometrical/physical meaning of the fractional integral with complex exponent?,
J. Non-Cryst. Solids {\bf 351}, 2888--2899 (2005)

\bibitem{Podlubny}
Podlubny, I.:
Geometric and physical interpretation of fractional integration
and fractional differentiation,
Fract. Calc. Appl. Anal. {\bf 5}, 367--386 (2002)
{\tt arXiv:math/0110241}

\bibitem{caputo}
Caputo, M.:
Linear models of dissipation whose $Q$ is almost frequency independent,
Geophys. J. R. Astr. Soc. {\bf 13}, 529--539 (1967)

\bibitem{caputo2}
Caputo, M., Mainardi, F.:
Linear models of dissipation in anelastic solids,
Riv. Nuovo Cimento (Ser. II) {\bf 1}, 161--198 (1971)

\bibitem{Riewe}
Riewe, F.:
Nonconservative Lagrangian and Hamiltonian mechanics,
Phys. Rev. E (3) {\bf 53}, 1890--1899 (1996)

\bibitem{Riewe2}
Riewe, F.:
Mechanics with fractional derivatives,
Phys. Rev. E (3) {\bf 55}, part B, 3581--3592 (1997)

\bibitem{Bauer}
Bauer, P.S.:
Dissipative Dynamical Systems: I,
Proc. Natl. Acad. Sci. {\bf 17}, 311--314 (1931)

\bibitem{Agrawal}
Agrawal, O.P.:
Formulation of Euler-Lagrange equations
for fractional variational problems,
J. Math. Anal. Appl. {\bf 272}, 368--379 (2002)

\bibitem{BA}
Baleanu, D., Agrawal, Om.P.:
Fractional Hamilton formalism within Caputo's derivative,
Czechoslovak J. Phys. {\bf 56}, 1087--1092 (2006)
{\tt arXiv:math-ph/0612025}

\bibitem{Cresson}
Cresson, J.:
Fractional embedding of differential operators and Lagrangian systems,
J. Math. Phys. {\bf 48}, 033504, 34~pp (2007)
{\tt arXiv:math/0605752}

\bibitem{APT}
Almeida, R., Pooseh, S., Torres, D.F.M.:
Fractional variational problems depending on indefinite integrals,
Nonlinear Anal. {\bf 75}, 1009--1025 (2012)
{\tt arXiv:1102.3360}

\bibitem{AT}
Almeida, R., Torres, D.F.M.:
Necessary and sufficient conditions for the fractional calculus
of variations with Caputo derivatives,
Commun. Nonlinear Sci. Numer. Simul. {\bf 16}, 1490--1500 (2011)
{\tt arXiv:1007.2937}

\bibitem{OMT}
Odzijewicz, T., Malinowska, A.B., Torres, D.F.M.:
Fractional variational calculus with classical
and combined Caputo derivatives,
Nonlinear Anal. {\bf 75}, 1507--1515 (2012)
{\tt arXiv:1101.2932}

\bibitem{MyID:226}
Odzijewicz, T., Malinowska, A.B., Torres, D.F.M.:
Generalized fractional calculus with applications to the calculus of variations,
Comput. Math. Appl., in press. DOI: 10.1016/j.camwa.2012.01.073 (2012)
{\tt arXiv:1201.5747}

\bibitem{MyID:227}
Odzijewicz, T., Malinowska, A.B., Torres, D.F.M.:
Fractional calculus of variations in terms of a generalized fractional integral with applications to physics,
Abstr. Appl. Anal. {\bf 2012}, Art. ID 871912, 24~pp (2012)
{\tt arXiv:1203.1961}

\bibitem{CI}
Cresson, J., Inizan, P.:
Irreversibility, least action principle and causality,
{\tt arXiv:0812.3529} [math-ph] (2009)

\bibitem{DY}
Dreisigmeyer, D.W., Young, P.M.:
Nonconservative Lagrangian mechanics:
a generalized function approach,
J. Phys. A {\bf 36}, 8297--8310 (2003)
{\tt arXiv:physics/0306142}

\bibitem{MyID:208}
Malinowska, A.B., Torres, D.F.M.:
Introduction to the Fractional Calculus of Variations.
Imperial College Press, London
\& World Sci. Publishing, Singapore (2012)

\bibitem{AMT}
Almeida, R., Malinowska, A.B., Torres, D.F.M.:
Fractional Euler-Lagrange differential equations via Caputo derivatives.
In: Baleanu, D., Tenreiro Machado, J.A., Luo, A.C.J. (eds.):
Fractional Dynamics and Control, pp.~109--118. Springer, New York (2012)
{\tt arXiv:1109.0658}

\bibitem{SKM}
Samko, S.G., Kilbas, A.A., Marichev, O.I.:
Fractional Integrals and Derivatives.
Gordon and Breach, Yverdon (1993)

\bibitem{Diethelm}
Diethelm, K.:
The Analysis of Fractional Differential Equations.
Springer, Berlin (2010)

\bibitem{KolwankarGangal}
Kolwankar, K.M., Gangal, A.D.:
Fractional differentiability of nowhere
differentiable functions and dimensions,
Chaos {\bf 6}, 505--513 (1996)
{\tt arXiv:chao-dyn/9609016}

\bibitem{Jumarie}
Jumarie, G.:
From self-similarity to fractional derivative of non-differentiable
functions via Mittag-Leffler function,
Appl. Math. Sci. (Ruse) {\bf 2}, 1949--1962 (2008)

\bibitem{Xiong:Wang:FDA12}
Wang, X.:
Fractional geometric calculus: toward a unified
mathematical language for physics and engineering.
In: Chen, W., Sun, H.-G., Baleanu, D. (eds.):
Proceedings of the Fifth Symposium
on Fractional Differentiation and Its Applications (FDA'12).
Paper \#034, Hohai University, Nanjing (2012)

\bibitem{MR0160139}
Gelfand, I.M., Fomin, S.V.:
Calculus of Variations.
Prentice Hall, Englewood Cliffs, NJ (1963)

\end{thebibliography}
\end{document}